\newtheorem{lemma}{Lemma}[section]
\newtheorem{proposition}[lemma]{Proposition}
\newtheorem{theorem}[lemma]{Theorem}
\newtheorem{corollary}[lemma]{Corollary}
\numberwithin{equation}{section}
\newcommand{\ud}{\mathrm{d}}
\newcommand{\RR}{\mathbb{R}}
\newcommand{\f}{\frac}
\newcommand{\xx}{|x|^2}
\newcommand{\yy}{|y|^2}
\newcommand{\xy}{\langle x,y\rangle}
\newcommand{\pppp}[4]%
  {\frac{\partial^3{#1}}{\partial{#2}\partial{#3}\partial{#4}}}
\newcommand{\p}{\phi}
\newcommand{\ps}{\phi(s)}
\newcommand{\pab}{\alpha\phi\left(\frac{\beta}{\alpha}\right)}
\newcommand{\sq}{\frac{(\alpha+\beta)^2}{\alpha}}
\renewcommand{\a}{\alpha}
\renewcommand{\b}{\beta}
\newcommand{\ab}{(\alpha,\beta)}
\newcommand{\ta}{\tilde\alpha}
\newcommand{\tb}{\tilde\beta}
\newcommand{\ha}{\hat\alpha}
\newcommand{\hb}{\hat\beta}
\newcommand{\ba}{\bar\alpha}
\newcommand{\bb}{\bar\beta}
\newcommand{\aij}{a_{ij}}
\newcommand{\bi}{b_i}
\newcommand{\bj}{b_j}
\newcommand{\bij}{b_{i|j}}
\newcommand{\baij}{\bar a_{ij}}
\newcommand{\bbij}{\bar b_{i|j}}
\newcommand{\rij}{r_{ij}}
\newcommand{\sij}{s_{ij}}
\newcommand{\ri}{r_i}
\newcommand{\si}{s_i}
\newcommand{\rj}{r_j}
\newcommand{\sj}{s_j}
\newcommand{\brij}{\bar r_{ij}}
\newcommand{\bRic}{{}^{\bar{\alpha}}{\mathrm{Ric}}}
\begin{document}
\title{Douglas metrics of $\ab$ type}
\footnotetext{\emph{Keywords}:Finsler geometry, $\ab$-metric, Douglas curvature,  $\b$-deformation, conformal $1$-form.
\\
\emph{Mathematics Subject Classification}: 53B40, 53C60.}

\author{Changtao Yu}
\date{September 10, 2016}
\maketitle

\begin{abstract}
In this paper, the Douglas curvature of $\ab$-metrics, a special class of Finsler metrics defined by a Riemannian metric $\a$ and a $1$-form $\b$, is studied. These metrics with vanishing Douglas curvature in dimension $n\geq3$ are classified by using a new class of metrical deformations called $\b$-deformations. The result shows that conformal $1$-forms of Riemannian metrics play a key role, and an effective way to construct such $1$-forms is  provided also by $\b$-deformations.
\end{abstract}

\section{Introduction}
In 1927, J. Douglas introduced the Douglas curvature for Finsler metrics\cite{D}. Douglas curvature is an important projectively invariant in Finsler geometry. It it also a non-Riemannian quantity, since all the Riemannian metrics have vanishing Douglas curvature inherently. Finsler metrics with vanishing Douglas curvature are called {\emph Douglas metrics}. Roughly speaking, a Douglas metric is a Finsler metric which is locally projectively equivalent to a Riemannian metric\cite{Li}.

Douglas metrics form a rich class of Finsler metrics including locally projectively flat Finsler metrics and Berwald metrics, the later are those metrics whose Berwald curvature vanishes\cite{BM}. It is known that a Finsler metric is locally projectively flat, namely its geodesics are all straight line segments in some suitable locally coordinate system, if and only if its both Douglas curvature and Weyl curvature vanish.

In this paper, we will focus on an important class of Finsler metrics called $\ab$-{\emph metrics}\cite{bacs-cxy-szm-curv}, which are given as
$$F=\a\phi\left(\f{\b}{\a}\right),$$
where $\a=\sqrt{a_{ij}(x)y^iy^j}$ is a Riemannian metric, $\b=b_i(x)y^i$ is a $1$-form and $\phi(s)$ is a smooth function. In 2011, the author pointed out that $\ab$-norms are those Minkowski norms which are preserved under the action of orthogonal group $O(n-1)$, which means that the indicatrix of a $\ab$-norm is a rotational hypersurface with the rotation axis passing through the origin\cite{YCT}. It is obvious that Euclidian norm is just the Minkowski norm preserved under the action of $O(n)$. In this sense, the class of $\ab$-metrics is the metrical category which is nearest to Riemannian metrics in all the Finsler metrics from the geometric point of view. Actually,$\ab$-metrics can be regards as the resulting of a Riemannian metric $\a$ disturbed by using a $1$-form $\b$, and the function $\phi$ represents the disturbing way.

Because of its excellent symmetry and computability, many nice results about $\ab$-metrics have been acquired in the past forty years\cite{bacs-cxy-szm-curv}. Moreover, $\ab$-metrics provides several wonderful metrical models for physics and biology\cite{aim-ttos,cl}, and the most important one is the so-called Randers metrics\cite{Ran}.

Randers metrics, given as $F=\a+\b$~(the corresponding function is $\phi(s)=1+s$), are the solutions of Zermelo's navigation problem\cite{db-robl-szm-zerm}. In the original discussions of the physicist G.~Randers, $\a$ and $\b$  represent a gravitational field and an electromagnetic field respectively. It is known that a Randers metrics $F=\a+\b$ is a Douglas metric if and only if $\b$ is closed\cite{BM}.

\begin{figure}[h]
  \centering
  \includegraphics{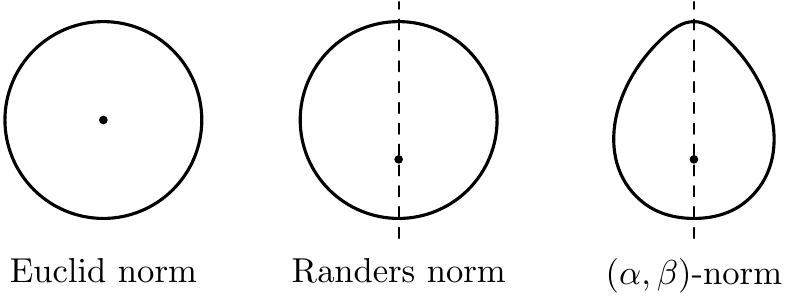}
\end{figure}

The aim of this paper is to provide a luminous characterization for Douglas $\ab$ metrics based on a result given by B. Li, Z. Shen and Y. Shen. In 2009, they proved a characterization\cite{Li}:
Let $F=\pab$ be a Finsler metric on on open subset $U\in\RR^n$ with dimension $n\geq3$. Assume that (a) $F$ is not of Randers type, i.e., $F\neq\sqrt{c_1\a^2+c_2\b^2}+c_3\b$ for any constants $c_1$, $c_2$ and $c_3$, (b) $\b$ is not parallel with respect to $\a$, (c) $\ud b\equiv0$ or $\ud b\neq0$ everywhere, then $F$ is a Douglas metric if and only if $\ps$ satisfies the following ODE,
\begin{eqnarray}\label{odeDouglasab}
\left\{1+(k_1+k_3)s^2+k_2s^4\right\}\p''(s)=(k_1+k_2s^2)\{\ps-s\p'(s)\},
\end{eqnarray}
where $k_1,k_2,k_3$ are constants with $k_2\neq k_1k_3$, and at the same time $\b$ satisfies
\begin{eqnarray}
\bij=\tau\left\{(1+k_1b^2)a_{ij}+(k_3+k_2b^2)b_ib_j\right\}\label{bijDouglasab}
\end{eqnarray}
for some scalar function $\tau(x)$.

As an example, one can see that the function $\phi(s)=(1+s)^2$ satisfies Equation (\ref{odeDouglasab}) with $(k_1,k_2,k_3)=(2,0,-3)$. The corresponding $\ab$-metrics $F=\sq$ are called square metrics sometimes or Berwald type of metrics, since the following famous metric
\begin{eqnarray}\label{BerwaldF}
F=\f{(\sqrt{(1-|x|^2)|y|^2+\langle x,y\rangle^2}+\langle x,y\rangle)^2}{(1-|x|^2)^2\sqrt{(1-|x|^2)|y|^2+\langle x,y\rangle^2}}
\end{eqnarray}
was constructed firstly by L.~Berwald in 1929\cite{Be}. This metric defined on the unit ball $\mathbb{B}^n(1)$ is projectively flat with vanishing flag curvature and hence is a Douglas metric naturally.

Square metrics may be the most important kind of $\ab$-metrics except for Randers metrics, because it is the rare metrics to be of great geometric properties. Recently, E. Sevim et al. studied Einstein $\ab$-metrics of Douglas type, and proved that such Finsler metrics must be either a Randers metric or a square metric\cite{CT,SSZ}.  Later on, Z. Shen and the author provided a concise characterization of Einstein square metric\cite{szm-yct-oesm}. More specifically, if $F=\sq$ is an Einstein-Finsler metric, then the Riemannian metric $\ba:=(1-b^2)\a$ and the $1$-form $\bb:=\sqrt{1-b^2}\b$ satisfy
\begin{eqnarray*}
\bRic=-(n-1)k^2\ba,\qquad\bbij=k\sqrt{1+\bar b^2}\; \baij,
\end{eqnarray*}
or equivalently, $\ba:=(1-b^2)^\frac{3}{2}\sqrt{\a^2-\b^2}$ and $\bb:=(1-b^2)^2\b$ satisfy
\begin{eqnarray*}
\bRic=0,\qquad\bbij=k\baij,
\end{eqnarray*}
where $k$ is a constant and $\bar b:=\|\bb\|_{\ba}$. In the above two cases $F$ can be reexpressed as
\begin{eqnarray}\label{expression1}
F=\f{(\sqrt{1+\bar b^2}\; \ba+\bb)^2}{\ta}
\end{eqnarray}
and
\begin{eqnarray}\label{expression2}
F=\f{(\sqrt{(1-\bar b^2)\ba^2+\bb^2}+\bb)^2}{(1-\bar b^2)^2\sqrt{(1-\bar b^2)\ba^2+\bb^2}}.
\end{eqnarray}
respectively. Both of them belong to a larger metrical category called {\emph general $\ab$-metrics}\cite{YCT}.

The key technique for discussing Einstein square metrics is a new kinds of metrical deformations called $\b$-deformations developed by the author\cite{yct-dhfp}, and it is also appropriate for our problem. $\b$-deformations, determined by a Riemannian metric $\a$ and a $1$-form $\b$, generalize the classical navigation deformation for Randers metrics. It has been applied successfully for many problems about $\ab$-metrics even including Riemannian metrics\cite{yct-odfr,yct-odfa}.

The effect of $\b$-deformations is to make clear the underlying geometry. Take square metrics for example. According to (\ref{bijDouglasab}), a square metric $F=\sq$ is a Douglas metric if and only if
\begin{eqnarray*}
\bij=\tau\left\{(1+2b^2)a_{ij}-3b_ib_j\right\}.
\end{eqnarray*}
However, the geometric meaning of $\b$ is very obscure. By taking the some $\b$-deformations mentioned above, one can see that a square metric is a Douglas metric if and only if it is given in the form (\ref{expression1}) or (\ref{expression2}) in which $\bb$ is closed and conformal with respect to $\ba$ according to Theorem \ref{mainDouglasab}. That is to say, the property about $\b$ becomes clear after $\b$-deformations.

The main result in this paper is the following classification theorem for Douglas $\ab$-metrics when $n\geq3$. The case when $n=2$ has been researched in \cite{YGJ}.
\begin{theorem}[Classification]\label{douglasab}
Let $F=\pab$ be a $\ab$-metric on a $n$-dimensional manifold $M$ with $n\geq3$. Then $F$ has vanishing Douglas curvature if and only if $F$ lies in one of the following cases:
\begin{enumerate}
\item $\b$ is parallel with respect to $\a$. In this case, $F$ has vanishing Douglas curvature for any suitable functions $\phi(s)$;
\item $F$ is a Douglas-Randers metric;
\item On the open subset $\mathcal U$ of $M$ where $\ud b\neq0$ everywhere or $\ud b\equiv0$, $F$ can be reexpressed (if necessary) still as the form $F=\pab$ such that one of the following holds
\begin{enumerate}
\item $\ps$ is given by
\begin{eqnarray}\label{D1p}
\phi_\sigma=\mathrm{hypergeom}\left(\left[-\f{1}{2},-\sigma\right],\left[\f{1}{2}\right],s^2\right)+\varepsilon s,\quad\sigma\neq0,-\frac{1}{2}.
\end{eqnarray}
$\a$ and $\b$ are determined by
\begin{eqnarray*}
\a=(1-\bar
b^2)^{-\sigma-\frac{1}{2}}\ba,\qquad
\b=(1-\bar b^2)^{-\sigma-\frac{1}{2}}\bb;
\end{eqnarray*}
\item $\ps$ is given by
\begin{eqnarray}\label{D1z}
\phi_\sigma=e^{-\sigma s^2}-\sqrt{\sigma\pi}s\,\mathrm{erf}\,(\sqrt{\sigma}s)+\varepsilon s,\quad\sigma=\pm1.
\end{eqnarray}
$\a$ and $\b$ are determined by
\begin{eqnarray*}
\a=e^{\sigma\bar b^2}\ba,\qquad\b=e^{\sigma\bar b^2}\bb;
\end{eqnarray*}
\item $\ps$ is given by
\begin{eqnarray}\label{D1n}
\phi_\sigma(s)=1+\varepsilon s+\sum_{n=1}^{\infty}a_{2n}s^{2n},\quad|\sigma|<1,
\end{eqnarray}
where $\{a_{2n}\}$ is determined by the recursion formula
$$a_{2n+2}=-2\sigma\f{2n(2n-1)}{(2n+2)(2n+1)}a_{2n}-\f{(2n-1)(2n-3)}{(2n+2)(2n+1)}a_{2n-2}$$
with the initial terms $a_0=1$ and $a_2=0$. $\a$ and $\b$ are determined by
\begin{eqnarray*}
\a=\frac{\exp\left(-\frac{\sigma}{2\sqrt{1-\sigma^2}}\arctan\frac{\sigma+\bar
b^2}{\sqrt{1-\sigma^2}}\right)}
{(1+2\sigma\bar b^2+\bar b^4)^\frac{1}{4}}\ba,\qquad
\b=\frac{\exp\left(-\frac{\sigma}{2\sqrt{1-\sigma^2}}\arctan\frac{\sigma+\bar
b^2}{\sqrt{1-\sigma^2}}\right)} {\left(1+2\sigma\bar b^2+\bar
b^4\right)^\frac{1}{4}}\bb.
\end{eqnarray*}
\end{enumerate}
In (a)-(c), $\ba$ is an arbitrary Riemannian metric, $\bb$ is a closed $1$-form which is conformal with respect to $\ba$, and $\bar b:=\|\bb\|_{\ba}$.
\end{enumerate}
\end{theorem}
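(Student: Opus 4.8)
The plan is to build on the Li--Shen--Shen characterization quoted above, so the first task is to dispose of the three situations in which that characterization does not directly apply. If $\b$ is parallel with respect to $\a$, then $F$ is a Berwald metric and hence automatically Douglas for every admissible $\p$, which is case (1). If $F$ is of Randers type, the classification of Douglas--Randers metrics is classical ($F$ is Douglas if and only if $\b$ is closed), giving case (2). The remaining hypothesis of the quoted theorem, that $\ud b$ has locally constant rank, is arranged by passing to the open subset $\mathcal U$ on which either $\ud b\equiv0$ or $\ud b\neq0$ everywhere; this is exactly where the substantive case (3) lives. On $\mathcal U$ the quoted theorem tells us that $F$ is Douglas if and only if $\ps$ solves the ODE \eqref{odeDouglasab} and $\b$ satisfies \eqref{bijDouglasab}, so the two halves of the proof treat these two conditions separately.

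For the ODE, I would introduce $\psi:=\ps-s\p'(s)$, the very quantity appearing on the right of \eqref{odeDouglasab}. Since $\psi'=-s\p''(s)$, equation \eqref{odeDouglasab} collapses to the first-order separable equation $\psi'/\psi=-s(k_1+k_2s^2)/\{1+(k_1+k_3)s^2+k_2s^4\}$, which integrates in closed form after the substitution $u=s^2$. The shape of the solution is governed entirely by the quadratic $k_2u^2+(k_1+k_3)u+1$: distinct real roots produce the hypergeometric solution \eqref{D1p}, the degenerate reduction $k_2=0$ (forcing $k_1=-k_3$) produces the Gaussian/error-function solution \eqref{D1z}, and a negative discriminant produces the oscillatory series \eqref{D1n}, the factors $\sqrt{1-\sigma^2}$ and $\arctan$ in case (c) being the tell-tale signs of complex roots. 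Recovering $\p$ from $\psi$ by the integrating factor $1/s$ introduces one free constant of integration, which is precisely the additive $\varepsilon s$ present in all three formulas. Normalizing the constants through the freedom in the $\ab$-representation, together with the admissibility (regularity and strong convexity) of $\p$, then fixes the stated ranges $\sigma\neq0,-\tfrac12$, $\sigma=\pm1$ and $|\sigma|<1$.

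The second half converts the opaque condition \eqref{bijDouglasab} into a transparent one by a $\b$-deformation. The key input is the transformation law for $\bij$ under a conformal $\b$-deformation $\a=\lambda\,\ba$, $\b=\lambda\,\bb$ with $\lambda$ a function of $\bar b^2$ (which preserves $b^2=\bar b^2$). Choosing $\lambda$ so that its logarithmic derivative matches the coefficients $k_1,k_2,k_3$ of \eqref{bijDouglasab}, the right-hand side of \eqref{bijDouglasab} is forced into the pure-trace form $\bbij=\bar\tau\,\baij$, which says exactly that $\bb$ is closed (the skew part vanishes) and conformal with respect to $\ba$. Solving for $\lambda$ is the same separable integration as in the ODE step, which is why the three deformation factors in (a)--(c) are the $\bar b^2$-analogues of the three functions $\phi_\sigma$; the consistency of these two integrations, the ODE in $s$ and the deformation in $\bar b^2$, is what pins down the single parameter $\sigma$ in each case. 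Conversely, starting from an arbitrary $\ba$ and a closed conformal $\bb$ and running the deformation backwards produces a pair $\ab$ satisfying \eqref{bijDouglasab} together with the claimed expressions for $\a$ and $\b$; combined with the ODE solutions this yields the ``if'' direction and completes case (3).

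I expect the main obstacle to be the deformation bookkeeping in the third step: deriving the precise transformation rule for $\bij$ under a $\b$-deformation and verifying that a single scalar factor $\lambda$ simultaneously absorbs all three coefficients $k_1,k_2,k_3$ of \eqref{bijDouglasab} into a pure trace. Matching this against the $\sigma$-parametrization coming from the ODE, so that the hypergeometric, Gaussian and oscillatory regimes line up on both sides, is the delicate point, as is checking that the re-expression $F=\pab$ remains a genuine $\ab$-metric after the deformation on all of $\mathcal U$.
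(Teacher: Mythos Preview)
Your overall architecture matches the paper's: dispose of the parallel and Randers cases, invoke Li--Shen--Shen on $\mathcal U$, then treat the ODE \eqref{odeDouglasab} and the datum condition \eqref{bijDouglasab} separately, the latter via a $\b$-deformation with $\a=\lambda\ba$, $\b=\lambda\bb$ and $\lambda=\lambda(\bar b^2)$. That second half is exactly the paper's Theorem~\ref{main3} (the special case $\kappa=0$, $\nu=e^\rho$ of Proposition~\ref{wiengieng}), so there you are on the same track.

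The gap is in the ODE half. Your substitution $\psi=\p-s\p'$ is correct and does reduce \eqref{odeDouglasab} to a separable first-order equation, but after integrating you still have a \emph{three}-parameter family of solutions indexed by $(k_1,k_2,k_3)$, not the one-parameter families $\phi_\sigma$ of (a)--(c). The passage from three parameters to one is not a matter of ``normalizing constants'': the paper does it by introducing the transformation group $G$ generated by $g_u(\p)(s)=\sqrt{1+us^2}\,\p(s/\sqrt{1+us^2})$ and $h_v(\p)(s)=\p(vs)$, which encodes exactly the non-uniqueness of the $\ab$-representation, computing how $(k_1,k_2,k_3)$ transform under $G$, and showing that the sign of the invariant $\Delta_1=(k_1+k_3)^2-4k_2$ separates the orbits into the three regimes. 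Without this group action your direct integration cannot produce the specific reduced equations $(1-s^2)\p''=2\sigma(\p-s\p')$, $\p''=2\sigma(\p-s\p')$, $(1+2\sigma s^2+s^4)\p''=s^2(\p-s\p')$ that yield \eqref{D1p}--\eqref{D1n}. Relatedly, your description of the middle case is off: case (b) corresponds to $\Delta_1=0$ (a repeated root of $k_2u^2+(k_1+k_3)u+1$), not to $k_2=0$ with $k_1=-k_3$; the latter is what the reduced equation looks like \emph{after} the $G$-action has been applied, which is precisely the step you are missing. Finally, the parameter $\sigma$ is not ``pinned down by consistency'' between the $s$-integration and the $\bar b^2$-integration; rather, both are determined independently by the same reduced triple $(k_1,k_2,k_3)$, and the fact that the deformation factor $\eta(\bar b^2)$ mirrors the structure of $\psi(s)$ is because both arise from integrating essentially the same rational function.
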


The function (\ref{D1p}) with $\sigma=1$ and $\epsilon=2$ is $\phi(s)=(1+s)^2$ in fact. By Theorem \ref{douglasab} we know that a square metric $F=\sq$
can be also expressed as
\begin{eqnarray}\label{simpleep}
F=\f{(\ba+\bb)^2}{(1-\bar b^2)^\frac{3}{2}\ba},
\end{eqnarray}
and it is a Douglas metric if and only if $\bb$ is closed and conformal with respect to $\ba$. Actually, as we have seen on square metrics, there are infinity many way to express a given $\ab$-metric~(one can find more discussions in \cite{yct-dhfp}). However, there is an inherent relevance between such two different expressions. Take (\ref{expression2}) and (\ref{simpleep}) for example. According to Proposition \ref{weingiengin}, if $\b$ is closed and conformal with respect to $\a$, then $\bb=\frac{\b}{\sqrt{1-b^2}}$ is closed and conformal with respect to $\ba=\frac{\sqrt{(1-b^2)\a^2+\b^2}}{1-b^2}$~(the corresponding factors in Proposition \ref{weingiengin} are $\kappa=-\frac{1}{1-b^2}$, $\rho=0$ and $\nu=\frac{1}{\sqrt{1-b^2}}$). Notice that $\bar b=b$ in this case, plugging all of them into (\ref{simpleep}) will lead to the expression (\ref{expression2}).

In order to construct Douglas $\ab$-metrics, the most important thing is to construct Riemannian metrics and their non-trivial conformal $1$-forms. In Section \ref{1f} we provide an effective method. Take also square metric for example. Suppose that it is expressed in the form (\ref{simpleep}), then by Corollary \ref{womeingengiengi} we know that all the metrics below
\begin{eqnarray*}
F=\f{\left(e^\rho\sqrt{\yy-\kappa\xy^2}+Ce^{2\rho}\sqrt{1-\kappa\xx}\xy\right)^2}
{\left(1-C^2e^{2\rho}\xx\right)^\frac{3}{2}e^\rho\sqrt{\yy-\kappa\xy^2}}
\end{eqnarray*}
are Douglas metrics for any functions $\kappa(\xx)$, $\rho(\xx)$ and non-zero constant $C$. All of them belong to the so-called spherically symmetric Finsler metrics\cite{Mo}. In particular, if we take $\kappa=\frac{\mu}{1+\mu\xx}$, $\rho=0$ and $C=1$, then we can get a series of Douglas square metrics below:
\begin{eqnarray*}
F=\f{(\sqrt{(1+\mu|x|^2)|y|^2-\mu\langle x,y\rangle^2}+\langle x,y\rangle)^2}{\sqrt{(1+\mu\xx)(1-|x|^2)^3}\sqrt{(1+\mu|x|^2)|y|^2-\mu\langle x,y\rangle^2}}.
\end{eqnarray*}
It is obvious that the corresponding metric when $\mu=-1$ is just the Berwald's original metric (\ref{BerwaldF}). More analytical examples can be constructed by Theorem \ref{douglasab} and Corollary \ref{womeingengiengi}.

\section{Basic conceptions of Finsler geometry and $\b$-deformations}\label{basic}
A Finsler metric $F$ on a manifold $M$ is a positive homogeneous continuous function
$F:TM\to[0,+\infty)$ where $F$ is smooth  on the slit tangent bundle $TM_o$ and the Hessian matrix $g_{ij}:=[\frac{1}{2}F^2]_{y^iy^j}$ is positive definite at any point $(x,y)\in TM_o$. Here $(x^i,y^i)$ denote the natural system of coordinates of $TM$.

Any Finsler metric $F$ on $M$ induces a spray vector field $G:=y^i\frac{\partial}{\partial x^i}-2G^i\frac{\partial}{\partial y^i}$ on $TM_o$. The projection of the integral curves of $G$ from $TM_o$ to $M$ determines the geodesics of the Finsler manifold $(M,F)$. The coefficients
$$G^i(x,y):=\f{1}{4}g^{il}\left\{[F^2]_{x^ky^l}y^k-[F^2]_{x^l}\right\}$$
are called the {\emph spray coefficients} of $F$, where $(g^{ij})$ is the inverse of $(g_{ij})$. If $F$ is a Riemannian metric, then $G^i$ are determined by its Christoffel symbols as $G^i(x,y)=\frac{1}{2}\Gamma^i{}_{jk}(x)y^jy^k$.

By definition, $F$ is called a {\emph Berwald metric} if $G^i$ are quadratic in $y\in T_xM$ at every point $x$, i.e.,
$$G^i(x,y)=\frac{1}{2}\Gamma^i{}_{jk}(x)y^jy^k.$$
$F$ is called a {\emph Douglas metric} if $G^i$ are in the following form
$$G^i(x,y)=\frac{1}{2}\Gamma^i{}_{jk}(x)y^jy^k+P(x,y)y^i,$$
or equivalently, if its Douglas tensor $\mathcal D:=D_j{}^i{}_{kl}\,\ud x^j\otimes\frac{\partial}{\partial x^i}\otimes\ud x^k\otimes\ud x^l$ on $TM_o$ vanishes, where
$$D_j{}^i{}_{kl}:=\f{\partial^3}{\partial y^j\partial y^k\partial y^l}\left(G^i-\f{1}{n+1}\f{\partial G^m}{\partial y^m}y^i\right).$$
The following diagram shows clear the relationship of the related metrical categorys.
$$\{\textrm{Riemann metrics}\}\subset\{\textrm{Berwald metrics}\}\subset\{\textrm{Douglas metrics}\}.$$

On the other hands, $\b$-{\emph deformations} are a triple kinds of deformations in terms of a given Riemannian metric $\a$ and a $1$-form $\b$ as follows,
\begin{eqnarray*}
&\ta=\sqrt{\a^2-\kappa(b^2)\b^2},\qquad\tb=\b;\label{b1}\\
&\ha=e^{\rho(b^2)}\ta,\qquad\hb=\tb;\label{b2}\\
&\ba=\ha,\qquad\bb=\nu(b^2)\hb.\label{b3}
\end{eqnarray*}
Be attention that the factor $\kappa$ must satisfy an additional condition $1-\kappa b^2>0$ to keep the resulting metric positive definite.


The abbreviations below are frequently-used in the literatures about $\ab$-metrics:
\begin{eqnarray*}
r_i:=b^jr_{ji},\quad r:=b^ir_i,\quad s_i:=b^js_{ji},
\end{eqnarray*}
where $\rij$ and $\sij$ are the symmetrization and antisymmetrization of $\bij$ respectively, i.e.,
$$\rij:=\f{1}{2}(\bij+b_{j|i}),\quad\sij:=\f{1}{2}(\bij-b_{j|i}).$$
It is clear that $\sij=0$ if and only if $\b$ is closed.

Finally, in this paper we just need a simple formula of $\b$-deformations below, which is a direct conclusion of Lemma 2, Lemma 3 and Lemma 4 in \cite{yct-dhfp}. Notice that $\brij=\frac{1}{2}(\bar b_{i|j}+\bar b_{j|i})$ in which $\bar b_{i|j}$ means the covariant derivative of $\bar b_i$ with respect to the corresponding Riemannian metric $\ba$.
\begin{proposition}\label{relationunderdeformations}
After $\b$-deformations,
\begin{eqnarray}
\brij&=&\f{\nu}{1-\kappa b^2}\rij+\f{\kappa\nu}{1-\kappa b^2}(\bi\sj+\bj\si)-\f{\kappa'\nu}{1-\kappa b^2}r\bi\bj+\f{2\rho'\nu}{1-\kappa b^2}r(\aij-\kappa\bi\bj)\nonumber\\
&&+\left(\f{\kappa'\nu b^2}{1-\kappa b^2}-2\rho'\nu+\nu'\right)\left\{\bi(\rj+\sj)+\bj(\ri+\si)\right\}.\label{iwneingien}
\end{eqnarray}
\end{proposition}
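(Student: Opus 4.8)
The plan is to use that the $\b$-deformation in question is the composition of three elementary deformations, each of which is governed by one of the three lemmas quoted from \cite{yct-dhfp}. Writing the deformation as the chain $(\a,\b)\mapsto(\ta,\tb)\mapsto(\ha,\hb)\mapsto(\ba,\bb)$, I would track the symmetrized covariant-derivative tensor step by step and then compose, collecting the contributions into the five coefficients of \eqref{iwneingien}. The one identity used over and over is $\partial_j(b^2)=2(\rj+\sj)$, which holds because $b^2$ is a scalar and $b_{i|j}=\rij+\sij$; this is the source of every gradient term in the statement, since each factor $\kappa,\rho,\nu$ is a function of $b^2$ and differentiating it throws off a term proportional to $\rj+\sj$.

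I would then apply the three lemmas in turn. For the metric-changing step $\ta=\sqrt{\a^2-\kappa\b^2}$, $\tb=\b$, Lemma 2 applies: the Sherman--Morrison identity gives $\tilde a^{ij}=a^{ij}+\f{\kappa}{1-\kappa b^2}b^ib^j$, hence $\tilde b^i=\f{b^i}{1-\kappa b^2}$ and $\tilde b^2=\f{b^2}{1-\kappa b^2}$, and symmetrizing the difference tensor of the two Levi-Civita connections produces $\trij$ as $\f{1}{1-\kappa b^2}\rij$ together with the $\kappa(\bi\sj+\bj\si)$, $-\kappa'r\bi\bj$ and $\kappa'b^2\{\bi(\rj+\sj)+\bj(\ri+\si)\}$ contributions (all over $1-\kappa b^2$), the $\kappa'$ terms coming from $\partial_j\kappa=2\kappa'(\rj+\sj)$. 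For the conformal step $\ha=e^{\rho}\ta$, Lemma 3 is the standard change of connection for $\hat a_{ij}=e^{2\rho}\tilde a_{ij}$; since $\hb=\b$ is unchanged, $\hrij=\trij-\bi\partial_j\rho-\bj\partial_i\rho+\tilde a_{ij}\tilde a^{kl}(\partial_l\rho)\bk$. Using $\partial_l\rho=2\rho'(r_l+s_l)$, the contraction $\tilde a^{kl}\bk(\partial_l\rho)=2\rho'\tilde b^l(r_l+s_l)=\f{2\rho'r}{1-\kappa b^2}$ (because $b^ls_l=0$) turns the last term into $\f{2\rho'r}{1-\kappa b^2}(\aij-\kappa\bi\bj)$, while the two middle terms give $-2\rho'\{\bi(\rj+\sj)+\bj(\ri+\si)\}$. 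Finally the rescaling $\bb=\nu\hb$, $\ba=\ha$ leaves the connection fixed, so Lemma 4 gives simply $\brij=\nu\hrij+\nu'\{\bi(\rj+\sj)+\bj(\ri+\si)\}$.

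Composing the three steps and collecting gives \eqref{iwneingien}: the bare $\rij$ is touched only in the first step and then scaled by $\nu$, producing the coefficient $\f{\nu}{1-\kappa b^2}$; the $\bi\sj+\bj\si$, $r\bi\bj$ and $r(\aij-\kappa\bi\bj)$ terms acquire their stated coefficients after the final multiplication by $\nu$; and the three separate gradient contributions add up to the coefficient $\f{\kappa'\nu b^2}{1-\kappa b^2}-2\rho'\nu+\nu'$ of $\bi(\rj+\sj)+\bj(\ri+\si)$. I expect the main difficulty to be organizational rather than conceptual: one must carry the full set of intermediate quantities ($\trij$ and its traces $\tilde r_i,\tilde s_i,\tilde r$, the norm $\tilde b^2$, and their hatted analogues) correctly into each subsequent deformation, being careful about which squared norm is the argument of each factor and of its derivative. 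The closed forms $\tilde b^i=\f{b^i}{1-\kappa b^2}$, $\tilde b^2=\f{b^2}{1-\kappa b^2}$ and the contractions $b^l(r_l+s_l)=r$, $b^ls_l=0$ keep these substitutions routine, and separating the symmetric and antisymmetric parts until the final collection step minimizes the risk of sign or factor errors.
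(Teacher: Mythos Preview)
Your proposal is correct and is precisely the approach the paper indicates: the paper states the proposition as ``a direct conclusion of Lemma 2, Lemma 3 and Lemma 4 in \cite{yct-dhfp}'' without further detail, and your plan of tracking $\rij\mapsto\trij\mapsto\hrij\mapsto\brij$ through the three elementary deformations and composing is exactly that. The intermediate identities you record (in particular $\tilde b^i=\f{b^i}{1-\kappa b^2}$, $\partial_j\rho=2\rho'(\rj+\sj)$, $\tilde b^l(r_l+s_l)=\f{r}{1-\kappa b^2}$, and the final $\brij=\nu\hrij+\nu'\{\bi(\rj+\sj)+\bj(\ri+\si)\}$) are the right ones and assemble to \eqref{iwneingien} as you describe.
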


\section{Deformations for data $\ab$ satisfying (\ref{bijDouglasab})}
Before our discussions, two facts should be pointed out. First, if $\b$ is parallel with respect to $\a$, then $F=\a\phi(\frac{\b}{\a})$ is always a Douglas metric for any function $\phi$ and any metric $\a$. So, this is a trivial case. Second, when $k_2=k_1k_3$, the solutions of (\ref{odeDouglasab}) are $\phi(s)=\sqrt{c_1\a^2+c_2\b^2}+c_3\b$. So the condition $k_2\neq k_1k_3$ ensures the corresponding $\ab$-metric is not of Randers type.

Assume that $F=\pab$ is a non-Randers type Douglas metric and $\b$ is not parallel with respect to $\a$, then $\b$ satisfies (\ref{bijDouglasab}) for some constants $k_1$, $k_2$ and $k_3$. It is obvious that the geometric meaning of $\b$ is very obscure. At least, such property about $\b$ is non-linear. That is to say, if you have two $1$-forms of such property, their linear combination  will not be of the same property any more. Hence, we would like to make clear the underlying property of $\b$, and it can be realized just by taking some suitable $\b$-deformations.

Firstly, if $\b$ satisfies (\ref{bijDouglasab}), then $\sij=0$ and hence $\b$ is closed. As a result, $\si=0$. Moreover, we have
\begin{eqnarray*}
\ri=\tau\{1+(k_1+k_3)b^2+k_2b^4\}\bi,\qquad r=\tau\{1+(k_1+k_3)b^2+k_2b^4\}b^2.
\end{eqnarray*}
Plugging all the equalities above into (\ref{iwneingien}) yields
\begin{eqnarray*}
\brij&=&\f{\tau\nu}{1-b^2\kappa}\Big\{1+k_1b^2+2[1+(k_1+k_3)b^2+k_2b^4]b^2\rho'\Big\}(\aij-\kappa\bi\bj)\\
&&+\f{\tau}{1-b^2\kappa}\Big\{[(1+k_1b^2)\kappa+k_3+k_2b^2]\nu+[1+(k_1+k_3)b^2+k_2b^4]b^2\kappa'\nu\\
&&-4[1+(k_1+k_3)b^2+k_2b^4](1-b^2\kappa)\rho'\nu
+2\{1+(k_1+k_3)b^2+k_2b^4\}(1-b^2\kappa)\nu'\Big\}\bi\bj.
\end{eqnarray*}
Notice that the rank of $\{\aij-\kappa\bi\bj\}$ is $n$ and rank of $\{\bi\bj\}$ is $1$. Hence, $\bb$ is conformal with respect to $\ba$, which means that $\brij=\sigma(x)(\aij-\kappa\bi\bj)$ for some scalar function $\sigma$, if and only if
\begin{eqnarray*}\label{woemig}
&&2\left\{1+(k_1+k_3)b^2+k_2b^4\right\}(1-\kappa b^2)\nu'\\
&=&-\left\{(1+k_1b^2)\kappa+k_3+k_2b^2+[1+(k_1+k_3)b^2+k_2b^4]b^2\kappa'-4[1+(k_1+k_3)b^2+k_2b^4](1-b^2\kappa )\rho'\right\}\nu,
\end{eqnarray*}
or equivalently,
$$\f{\nu'}{\nu}=2\rho'-\f{\kappa+b^2\kappa'}{2(1-b^2\kappa)}-\f{k_3+k_2b^2}{2\left\{1+(k_1+k_3)b^2+k_2b^4\right\}}.$$
Solving the above equation, we can obtain the following result immediately.
\begin{proposition}\label{wiengieng}
If $\b$ satisfies the condition (\ref{bijDouglasab}), then $\bb$ is closed and conformal with respect to $\ba$ after $\b$-deformations if and only if the deformation factors satisfy
\begin{eqnarray}\label{wienigehiimeig}
\nu=C\sqrt{1-b^2\kappa}e^{2\rho}e^{-\int\f{k_3+k_2b^2}{2\{(1+(k_1+k_3)b^2+k_2b^4\}}\,\ud b^2},
\end{eqnarray}
where $C$ is a non-zero constant.
\end{proposition}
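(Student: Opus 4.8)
The plan is to convert the pointwise condition (\ref{bijDouglasab}) into scalar data, feed it into the deformation identity (\ref{iwneingien}), read off the conformality of $\bb$ as the vanishing of a single coefficient, and finally integrate the resulting ODE for $\nu$. I would begin by extracting the consequences of (\ref{bijDouglasab}): its right-hand side is symmetric in $i,j$, so $\bij$ is symmetric, whence $\sij=0$, $\b$ is closed, $\si=0$ and $\rij=\bij$. Contracting (\ref{bijDouglasab}) once with $b^j$ and then with $b^i$ produces
\begin{eqnarray*}
\ri=\tau\{1+(k_1+k_3)b^2+k_2b^4\}\bi,\qquad r=\tau\{1+(k_1+k_3)b^2+k_2b^4\}b^2,
\end{eqnarray*}
which are precisely the building blocks occurring in Proposition \ref{relationunderdeformations}.

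Next I would substitute $\si=0$ together with these expressions for $\rij$, $\ri$ and $r$ into (\ref{iwneingien}). Every surviving term is a multiple of $\aij$, $\kappa\bi\bj$ or $\bi\bj$; after rewriting $\aij=(\aij-\kappa\bi\bj)+\kappa\bi\bj$ and collecting, $\brij$ takes the shape
\begin{eqnarray*}
\brij=A(b^2)(\aij-\kappa\bi\bj)+B(b^2)\,\bi\bj
\end{eqnarray*}
with $A$ and $B$ explicit scalar functions of $\kappa,\rho,\nu$ and their derivatives. Since $\ba^2=e^{2\rho}(\a^2-\kappa\b^2)$, the tensor $\aij-\kappa\bi\bj$ is a positive multiple of $\baij$ — here one uses the built-in positivity $1-\kappa b^2>0$ — so it has rank $n$, whereas $\bi\bj$ has rank $1$. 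Consequently $\brij$ is proportional to $\baij$, that is, $\bb$ is conformal with respect to $\ba$, if and only if $B(b^2)\equiv0$. Writing out $B=0$ and dividing by the common factors gives the first-order linear ODE
$$\f{\nu'}{\nu}=2\rho'-\f{\kappa+b^2\kappa'}{2(1-b^2\kappa)}-\f{k_3+k_2b^2}{2\{1+(k_1+k_3)b^2+k_2b^4\}}.$$

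Finally I would integrate this equation with respect to $b^2$. The decisive observation is that the middle term is an exact logarithmic derivative: since $\f{\ud}{\ud b^2}(1-b^2\kappa)=-(\kappa+b^2\kappa')$, one has $\int\f{\kappa+b^2\kappa'}{2(1-b^2\kappa)}\,\ud b^2=-\f{1}{2}\ln(1-b^2\kappa)$, while $\int 2\rho'\,\ud b^2=2\rho$ and the $k$-dependent term is left as an explicit quadrature. Exponentiating and absorbing the integration constant into a nonzero constant $C$ yields exactly (\ref{wienigehiimeig}), the $-\f{1}{2}\ln(1-b^2\kappa)$ contribution being what produces the factor $\sqrt{1-b^2\kappa}$.

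The step I expect to be the main obstacle is the substitution into (\ref{iwneingien}): one must expand all terms of Proposition \ref{relationunderdeformations}, use $\si=0$, and regroup everything cleanly into the two rank-classes $\aij-\kappa\bi\bj$ and $\bi\bj$, since any error in isolating the coefficient $B$ of $\bi\bj$ would corrupt the ODE and hence the final formula. Once $B$ is correctly identified, the rank dichotomy and the integration are both routine, the only genuine insight being the recognition of the $1-b^2\kappa$ logarithmic structure.
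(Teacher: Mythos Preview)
Your proposal is correct and follows exactly the same route as the paper: extract $\sij=0$, $\ri$ and $r$ from (\ref{bijDouglasab}), substitute into (\ref{iwneingien}), split $\brij$ into its $(\aij-\kappa\bi\bj)$ and $\bi\bj$ parts, use the rank dichotomy to reduce conformality to the vanishing of the $\bi\bj$-coefficient, and integrate the resulting ODE for $\nu$. The paper states the same ODE verbatim and then simply says ``solving the above equation'' to obtain (\ref{wienigehiimeig}); your explicit recognition of $-\tfrac{1}{2}\ln(1-b^2\kappa)$ as the antiderivative of the middle term is just filling in that elided step.
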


The discussions up to now show that if $F=\pab$ is a non-Randers type Douglas $\ab$-metric, then we can always take some $\b$-deformations for $\a$ and $\b$ to make sure that the resulting $1$-form $\bb$ is conformal with respect to $\ba$. We hope the reverse holds too. That is to say, we hope the required data $\ab$ satisfying (\ref{bijDouglasab}) can be obtained by the data $(\ba,\bb)$ in which $\bb$ is closed and conformal with respect to $\ba$.  It can be realized easily as long as the deformations are reversible.

It is easy to verify that $\bar a^{ij}=e^{-2\rho}(a^{ij}+\frac{\kappa}{1-b^2\kappa}b^ib^j)$, so
\begin{eqnarray}\label{bbb}
\bar b^2:=\|\bb\|^2_{\ba}=\f{\nu^2e^{-2\rho}}{1-b^2\kappa}b^2=C^2e^{2\rho}e^{-\int\f{k_3+k_2b^2}{\{(1+(k_1+k_3)b^2+k_2b^4\}}\,\ud b^2}b^2.
\end{eqnarray}
In order to ensure the reversibility of deformations, we just need to ask $\bar b^2$~(as the function of $b^2$)~has inverse function. Moreover, we hope the inverse function will be as simple as possible. So, we {\emph can} choose
\begin{eqnarray}\label{rho}
\rho=\int\f{k_3+k_2b^2}{2\{(1+(k_1+k_3)b^2+k_2b^4\}}\,\ud b^2
\end{eqnarray}
and $C=1$ here. In this case, $\bar b^2=b^2$ by (\ref{bbb}) and
\begin{eqnarray}\label{nu}
\nu=\sqrt{1-b^2\kappa}e^{\rho}
\end{eqnarray}
by (\ref{wienigehiimeig}).

Now, we can choose any deformation factor $\kappa$ satisfing $1-b^2\kappa>0$ to make sure the positive definiteness of $\ba$. Lemma 1 in \cite{yct-dhfp} shown that if $F=\pab$ is a {\emph regular} Finsler metric with $\ps$ satisfying (\ref{odeDouglasab}), then the following inequalities
\begin{eqnarray}\label{womeing}
1+k_1s^2>0,\quad 1+(k_1+k_3)s^2+k_2s^4>0,\quad\forall|s|\leq b<b_o
\end{eqnarray}
hold. According to the second inequality above, we can take
\begin{eqnarray}\label{kappa1}
\kappa=-(k_1+k_3+k_2b^2).
\end{eqnarray}

Using the deformation factors $(\kappa,\rho,\nu)$ determined by (\ref{kappa1}), (\ref{rho}) and (\ref{nu}), we have the following characterization for non-Randers type Douglas $\ab$-metrics.
\begin{theorem}\label{mainDouglasab}
Let $F=\pab$ be a Finsler metric on a $n$-dimensional manifold $M$ with $n\geq3$. Suppose that the function $\ps$ satisfies (\ref{odeDouglasab}). Then $F$ is a Douglas metric if and only if $\a$ and $\b$ can be expressed as
\begin{eqnarray*}
\a=\eta(\bar b^2)\sqrt{\ba^2-\f{(k_1+k_3+k_2\bar
b^2)}{1+(k_1+k_3)\bar
b^2+k_2\bar b^4}\bb^2},\qquad
\b=\f{\eta(\bar b^2)}{\sqrt{1+(k_1+k_3)\bar b^2+k_2\bar
b^4}}\bb,
\end{eqnarray*}
where $\ba$ is an arbitrary Riemannian metric, $\bb$ is a closed $1$-form and conformal with respect to $\ba$, $\bar b:=\|\bb\|_{\ba}$.
The function $\eta(\bar b^2)$ is determined by the coefficients $k_1,k_2,k_3$ and given in the following five cases,
\begin{enumerate}[(1)]
\item When $k_2=0,~k_1+k_3=0$,
$$\eta(\bar b^2)=\exp\left\{-\f{k_3\bar b^2}{2}\right\};$$
\item When $k_2=0,~k_1+k_3\neq0$,
$$\eta(\bar b^2)=\left\{1+(k_1+k_3)\bar b^2\right\}^{-\f{k_3}{2(k_1+k_3)}};$$
\item When $k_2\neq0,~\Delta_1>0$,
$$\eta(\bar b^2)=\f{\left\{\f{\sqrt{\Delta_1}+k_1+k_3}{\sqrt{\Delta_1-k_1-k_3}}\cdot\f{\sqrt\Delta_1-k_1-k_3-2k_2\bar b^2}{\sqrt\Delta_1+k_1+k_3+2k_2\bar
b^2}\right\}^\f{k_1-k_3}{4\sqrt\Delta_1}}{\sqrt[4]{1+(k_1+k_3)\bar
b^2+k_2\bar b^4}};$$
\item When $k_2\neq0,~\Delta_1=0$,
$$\eta(\bar b^2)=\f{\sqrt{2}\exp\left\{\f{k_3-k_1}{k_1+k_3}\left[\f{1}{2+(k_1+k_3)\bar b^2}-\f{1}{2}\right]\right\}}{\sqrt{2+(k_1+k_3)\bar b^2}};$$
\item When $k_2\neq0,~\Delta_1<0$,
$$\eta(\bar b^2)=\f{\exp\left\{\f{k_1-k_3}{2\sqrt{-\Delta_1}}\left(\arctan\f{k_1+k_3+2k_2\bar
b^2}{\sqrt{-\Delta_1}}-\arctan\f{k_1+k_3}{\sqrt{-\Delta_1}}\right)\right\}}{\sqrt[4]{1+(k_1+k_3)\bar b^2+k_2\bar b^4}},$$
\end{enumerate}
where $\Delta_1:=(k_1+k_3)^2-4k_2$.
\end{theorem}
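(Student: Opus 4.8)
The plan is to read Theorem~\ref{mainDouglasab} as the explicit, reversible consolidation of the deformation analysis already carried out in this section, so the argument splits into a logical reduction and an integration. For the reduction I would first invoke the Li--Shen--Shen criterion quoted in the introduction: since $\ps$ is assumed to satisfy the ODE~(\ref{odeDouglasab}) and $F$ is automatically of non-Randers type (because $k_2\neq k_1k_3$), the metric $F=\pab$ is a Douglas metric if and only if $\b$ satisfies~(\ref{bijDouglasab}) for some scalar $\tau$. Hence it suffices to show that $\b$ obeys~(\ref{bijDouglasab}) exactly when $\a$ and $\b$ arise, via the $\b$-deformation with factors~(\ref{kappa1}), (\ref{rho}), (\ref{nu}), from a pair $(\ba,\bb)$ with $\bb$ closed and conformal with respect to $\ba$. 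The forward implication is essentially Proposition~\ref{wiengieng}: condition~(\ref{bijDouglasab}) forces $\sij=0$ and fixes $\ri$ and $r$, and substituting these into~(\ref{iwneingien}) shows that $\brij$ is conformal precisely when the factors satisfy~(\ref{wienigehiimeig}); the normalizing choices $C=1$ and~(\ref{rho}) then give $\bar b^2=b^2$ by~(\ref{bbb}) and collapse $\nu$ to~(\ref{nu}).

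The converse is where reversibility is used. By~(\ref{bbb}) the quantity $\bar b^2$ is a strictly monotone function of $b^2$ (indeed $\bar b^2=b^2$ under our choices), so the correspondence $(\a,\b)\leftrightarrow(\ba,\bb)$ is a genuine bijection, while~(\ref{womeing}) guarantees $1-\kappa b^2=1+(k_1+k_3)b^2+k_2b^4>0$, keeping $\ta$ and every metric in the chain positive definite. Because the linear relation~(\ref{iwneingien}) expressing $\brij$ through $\rij$ (with $\sij=0$) is invertible, the same computation applied to the reverse deformation sends a closed conformal $\bb$ back to a $\b$ satisfying~(\ref{bijDouglasab}); combined with the Li--Shen--Shen criterion this closes the equivalence.

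It then remains to make the inversion explicit. Unwinding the three-step deformation gives $\ba=e^{\rho}\sqrt{\a^2-\kappa\b^2}$ and $\bb=\nu\b=\sqrt{1-\kappa b^2}\,e^{\rho}\b$. Solving the second for $\b$ and inserting $1-\kappa b^2=1+(k_1+k_3)\bar b^2+k_2\bar b^4$ (from $\kappa$ in~(\ref{kappa1}) and $\bar b^2=b^2$), then back-substituting into $\a^2=e^{-2\rho}\ba^2+\kappa\b^2$, reproduces the two displayed formulas once we set $\eta(\bar b^2):=e^{-\rho}$. This part is routine algebra and carries no real difficulty.

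The genuine computational work, and the main obstacle, is evaluating $\eta=e^{-\rho}$ in closed form from~(\ref{rho}), i.e. integrating
$$\rho=\int\f{k_3+k_2t}{2\{1+(k_1+k_3)t+k_2t^2\}}\,\ud t,\qquad t=b^2.$$
I would split the numerator as $k_3+k_2t=\f12\big(2k_2t+(k_1+k_3)\big)+\f{k_3-k_1}{2}$; the first piece integrates to $\f14\ln\{1+(k_1+k_3)t+k_2t^2\}$, producing the universal factor $\{1+(k_1+k_3)\bar b^2+k_2\bar b^4\}^{-1/4}$ visible in cases (3)--(5). The residual integral $\f{k_3-k_1}{4}\int\ud t/\{k_2t^2+(k_1+k_3)t+1\}$ is what generates the five cases, according to whether $k_2=0$ (denominator linear, with subcases $k_1+k_3=0$ and $k_1+k_3\neq0$) or $k_2\neq0$ with discriminant $\Delta_1=(k_1+k_3)^2-4k_2$ positive (distinct real roots, partial fractions yield a logarithm of a ratio, hence the power law in (3)), zero (a double root giving the rational-exponent exponential in (4)), or negative (completing the square yields the arctangent in (5)). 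In each case the constant of integration is pinned by the normalization $\eta(0)=1$, which is exactly why case (5) carries the subtracted term $\arctan\frac{k_1+k_3}{\sqrt{-\Delta_1}}$. Matching the five antiderivatives against the stated expressions for $\eta$ finishes the proof, the only care needed being the bookkeeping of roots, signs and normalizing constants.
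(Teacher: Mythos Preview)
Your proposal is correct and follows essentially the same route as the paper: reduce via the Li--Shen--Shen criterion to condition~(\ref{bijDouglasab}), apply Proposition~\ref{wiengieng} with the specific factors~(\ref{kappa1}), (\ref{rho}), (\ref{nu}) to pass to $(\ba,\bb)$ with $\bb$ closed and conformal, use $\bar b^2=b^2$ for reversibility, and invert the deformation algebraically with $\eta=e^{-\rho}$. You actually give more than the paper does, since the paper's proof only writes out the inversion step and dismisses the five-case evaluation of $\eta$ as ``elementary integrations,'' whereas you outline the numerator split and discriminant casework explicitly.
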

\begin{proof}
Since $\bar b^2=b^2$, $\bb=\nu\b$ combining with (\ref{nu}) yields
$$\b=\nu^{-1}\bb=\frac{\eta(\bar b^2)}{\sqrt{1-\kappa(\bar b^2)\bar b^2}}\bb=\f{\eta(\bar b^2)}{\sqrt{1+(k_1+k_3)\bar b^2+k_2\bar
b^4}}\bb,$$
in which
$$\eta(\bar b^2):=e^{-\rho(\bar b^2)}=e^{-\int\f{k_3+k_2\bar b^2}{2\{(1+(k_1+k_3)\bar b^2+k_2\bar b^4\}}\,\ud\bar b^2}$$
can be determined by elementary integrations directly. Finally, by $\ba=e^{\rho}\sqrt{\a^2-\kappa\b^2}$ we have
$$\a=\sqrt{e^{-2\rho}\ba^2+\kappa\b^2}=\eta(\bar b^2)\sqrt{\ba^2+\frac{\kappa(\bar b^2)}{1-\bar b^2\kappa(\bar b^2)}\bb^2}=\eta(\bar b^2)\sqrt{\ba^2-\f{(k_1+k_3+k_2\bar
b^2)}{1+(k_1+k_3)\bar
b^2+k_2\bar b^4}\bb^2}.$$
\end{proof}

Actually, the triple data $(\kappa,\rho,\nu)$ used above is just what was used in \cite{yct-dhfp}. In that paper, the author proved such conclusion: if $F=\pab$ is a non-trivial and non-Randers type $n$-dimensional~$(n\geq3)$ locally projectively flat Finsler metric~(which implies $\phi$ and $\b$ should satisfy the same conditions (\ref{odeDouglasab}) and (\ref{bijDouglasab}), just as Douglas $\ab$-metrics), then after the specific $\b$-deformations due to these factors, the resulting Riemannian metric $\ba$ is locally projectively flat, and $\bb$ is closed and conformal with respect to $\ba$ (See Theorem 1.2 in \cite{yct-dhfp} for details). That is to say, the only distinction between local projectively flat $\ab$-metrics and Douglas $\ab$-metrics is that we don't need any constraint condition on $\ba$ for the latter. Such phenomenon is natural because Douglas metrics include all the locally projectively flat metrics.

However, Proposition \ref{wiengieng} make it possible for us to provide some other characterizations for Douglas $\ab$-metrics. Obviously, we can take $\kappa=0$, and in this case $\nu$ is equal to $e^\rho$ due to (\ref{nu}). Here we still take $\rho$ as (\ref{rho}), just because such deformation factor make the relationship between $\bar b^2$ and $b^2$ simplest. As a result, this triple data $(\kappa,\rho,\nu)$ will lead to a different equivalent characterization as follows.
\begin{theorem}\label{main3}
Let $F=\pab$ be a Finsler metric on a $n$-dimensional manifold $M$ with $n\geq3$. Suppose that the function $\ps$ satisfies (\ref{odeDouglasab}). Then $F$ is a Douglas metric if and only if $\a$ and $\b$ can be expressed as
\begin{eqnarray*}
\a=\eta(\bar b^2)\ba,\qquad
\b=\eta(\bar b^2)\bb,
\end{eqnarray*}
where $\ba$ is an arbitrary Riemannian metric, $\bb$ is a closed $1$-form and conformal with respect to $\ba$, $\bar b:=\|\bb\|_{\ba}$.
The function $\eta(\bar b^2)$ is determined by the same way in Theorem \ref{mainDouglasab}.
\end{theorem}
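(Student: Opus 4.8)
The plan is to follow the proof of Theorem \ref{mainDouglasab} almost verbatim, changing only the choice of the first deformation factor from $\kappa=-(k_1+k_3+k_2b^2)$ to $\kappa=0$. Everything substantial is already in place: the reduction, via the Li--Shen--Shen criterion, of the Douglas condition on $F$ to the condition (\ref{bijDouglasab}) on $\b$, together with Proposition \ref{wiengieng}. So the task reduces to checking that the simpler triple $(\kappa,\rho,\nu)=(0,\rho,e^\rho)$ is still reversible and produces the claimed expressions.

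First I would fix the factors. Keeping $\rho$ as in (\ref{rho}) and setting $\kappa=0$, formula (\ref{wienigehiimeig}) with $C=1$ collapses to $\nu=e^{2\rho}e^{-\rho}=e^\rho$, in agreement with (\ref{nu}) evaluated at $\kappa=0$. By Proposition \ref{wiengieng} this is exactly the condition under which the deformed $1$-form $\bb$ is closed and conformal with respect to $\ba$ whenever $\b$ satisfies (\ref{bijDouglasab}); this yields the forward implication, that a non-Randers Douglas metric $F$ can be written in the stated form.

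Next I would establish reversibility by computing $\bar b^2$. Substituting $\kappa=0$ and $\nu=e^\rho$ into (\ref{bbb}) gives $\bar b^2=\nu^2e^{-2\rho}b^2=b^2$, so $\bar b^2$ and $b^2$ are identified and the deformation inverts cleanly. Running the computation that precedes Proposition \ref{wiengieng} backward then shows that an arbitrary pair $(\ba,\bb)$ with $\bb$ closed and conformal reconstructs a pair $(\a,\b)$ satisfying (\ref{bijDouglasab}), so that $F$ is indeed Douglas. This reverse step is the only place needing care, since Proposition \ref{wiengieng} is phrased under the standing assumption that $\b$ already obeys (\ref{bijDouglasab}); the identity $\bar b^2=b^2$ is precisely what promotes the one-directional statement to an equivalence.

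Finally I would read off the metrics. Since $\kappa=0$ the first $\b$-deformation is trivial, so $\ba=e^\rho\sqrt{\a^2-\kappa\b^2}=e^\rho\a$ gives $\a=e^{-\rho}\ba=\eta(\bar b^2)\ba$ with $\eta:=e^{-\rho}$, while $\bb=\nu\b=e^\rho\b$ gives $\b=e^{-\rho}\bb=\eta(\bar b^2)\bb$. Because $\rho$ is defined by the same integral (\ref{rho}) as in Theorem \ref{mainDouglasab}, the function $\eta=e^{-\rho}$ is literally the one computed there; hence the five cases and the discriminant $\Delta_1=(k_1+k_3)^2-4k_2$ transfer unchanged and no new integration is required. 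I expect no genuine obstacle beyond the bookkeeping of the reverse direction noted above.
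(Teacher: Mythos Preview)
Your proposal is correct and matches the paper's own approach exactly: the paper does not even give a separate formal proof of Theorem~\ref{main3}, but simply remarks that one can take $\kappa=0$ (whence $\nu=e^{\rho}$ by (\ref{nu})), keep $\rho$ as in (\ref{rho}) so that $\bar b^2=b^2$, and then read off $\a=e^{-\rho}\ba=\eta(\bar b^2)\ba$ and $\b=e^{-\rho}\bb=\eta(\bar b^2)\bb$ with the same $\eta$ as in Theorem~\ref{mainDouglasab}. Your added comment on why the identity $\bar b^2=b^2$ upgrades Proposition~\ref{wiengieng} to an equivalence is a useful clarification the paper leaves implicit.
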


It is obvious that the characterization above is simpler than Theorem \ref{mainDouglasab}, although such description for Douglas $\ab$-metrics is far from the property of projective flatness. If necessary, one can choose other triple of deformation factors satisfying (\ref{wienigehiimeig}) and obtain other characterizations. For instance, one can take $\kappa=-k_1$ according to the first inequality in (\ref{womeing}).

\section{Proof of Theorem \ref{douglasab}}
The expression for any given $\ab$-metric is non-uniqueness. In order to see that, let's introduce two transformations for the function $\phi$:
$$g_u(\ps):=\sqrt{1+us^2}\phi(\f{s}{1+us^2}),\quad h_v(\ps):=\p(vs),$$
where $u$ and $v$ are constants with $v\neq0$. It is easy to verify that
\begin{eqnarray*}
g_{u_1}\circ g_{u_2}=g_{u_1+u_2},\quad h_{v_1}\circ h_{v_2}=h_{v_1v_2},\quad h_v\circ g_u=g_{uv^2}\circ h_v.
\end{eqnarray*}
With the above generation relationships, the transformations $g_u$ and $h_v$ generate a transformation group $G$ acting on the set of all the suitable functions $\phi$ for $\ab$-metrics $F=\pab$. Two functions $\phi_1(s)$ and $\phi_2(s)$ is equivalent under the action of $G$ if and only if they provide the same type of $\ab$-metrics. For instance, all the functions equivalent to $1+s$ will provide Randers type metrics. It is obvious that such functions are given by $\ps=\sqrt{1+us^2}+vs=g_u\circ h_v(1+s)$. Notice that the non-negativity for Finsler metrics asks $\phi(s)$ to be a positive function. Hence, we can always assume $\phi(0)=1$ after necessary scaling.

If $\phi$ is the solution of Equation (\ref{odeDouglasab}), then all the functions which are equivalent to $\phi$ will still satisfy (\ref{odeDouglasab}) due to Li-Shen-Shen's result. That is to say,  the set
$$\Phi:=\{\ps~|~\ps~\textrm{satisfies Eqn. (\ref{odeDouglasab}) and}~\p(0)=1\}$$
is closed under the action of $G$. More specifically, if $\psi(s)=(g_u\p)(s)$ where $\ps\in\Phi$ , then $\psi(s)$ is the solution of
$$\left\{1+(k_1'+k_3')s^2+k_2's^4\right\}\psi''(s)=(k_1'+k_2's^2)\left\{\psi(s)-s\psi'(s)\right\}$$
with the initial conditions $\psi(0)=1$ and $\psi'(0)=\epsilon$, where the constants $k_1'$, $k_2'$ and $k_3'$ are given by
$$k_1'=k_1+u,~k_3'=k_3+u,~k_2'=k_2+(k_1+k_3)u+u^2.$$
And iff $\varphi(s)=(h_v\p)(s)$, then $\varphi(s)$ is the solution of
$$\left\{1+(k_1''+k_3'')s^2+k_2''s^4\right\}\varphi''(s)=(k_1''+k_2''s^2)\left\{\varphi(s)-s\varphi'(s)\right\}$$
with the initial conditions $\varphi(0)=1$ and $\varphi'(0)=v\epsilon$, where the constants $k_1''$, $k_2''$ and $k_3''$ are given by
$$k_1''=v^2k_1,~k_3''=v^2k_3,~k_2''=v^4k_2.$$

Define three variables depended on $\ps$:
$$\Delta_1:=(k_1+k_3)^2-4k_2,\quad\Delta_2:=4(k_1k_3-k_2),\quad\Delta_3:=k_1-k_3.$$
It is obviously that $\Delta_1-\Delta_2=\Delta_3^2$, and $\Delta_2=0$ if and only if $F=\pab$ is of Randers type.

When $\Delta_2\neq0$, the author had prove In \cite{yct-dhfp} that the set $\Phi$ is one-to-one correspondence to the quadruple data $(k_1,k_2,k_3,\epsilon)$, and a couple of variables $(p,q)_\p$ determined below
$$(p,q)_\p:=\left(\f{\sqrt{\Delta_2}}{\Delta_3},\f{\epsilon^4}{\Delta_2}\right)$$
are complete system of invariants under the action of the transformations group $G$. Some special case of $(p,q)_\p$ are defined below:
\begin{itemize}
\item $p:=0$ when $\Delta_2=0$;
\vspace{-4bp}
\item $p:=\infty$ when $\Delta_2>0$ and $\Delta_3=0$;
\vspace{-4bp}
\item $p:=i\infty$ when $\Delta_2<0$ and $\Delta_3=0$;
\vspace{-4bp}
\item $q:=0$ when $\epsilon=0$;
\vspace{-4bp}
\item $q:=\infty$ when $\epsilon\neq0$ and $\Delta_2=0$.
\end{itemize}
It is easy to see that $(0,0)_\phi$ and $(0,\infty)_\phi$ correspond to Riemannian metrics and non-Riemann Randers metrics respectively.

Moreover, under the action of $G$, the three-parameter equation (\ref{odeDouglasab}) can be simplified as an one-parameter equation, with the reduction depending on the sign of $\Delta_1$. The process is complete provided in \cite{yct-dhfp}, so we just list the essentials here.

\noindent\textbf{Case 1}: $\Delta_1>0$

In this case, (\ref{odeDouglasab}) can be reduced as
\begin{eqnarray}\label{D1pa}
\left\{1-s^2\right\}\p''(s)=2\sigma\{\ps-s\p'(s)\},\quad\sigma\neq0,-\frac{1}{2},
\end{eqnarray}
where the restriction on $\sigma$ ensures $\Delta_2\neq0$.

The solutions of (\ref{D1pa}) are given by
\begin{eqnarray*}
\p_\sigma=1+\epsilon s+\sum_{n=1}^{\infty}\left\{\prod_{k=1}^{n}\f{(k-\sigma-1)(2k-3)}{k(2k-1)}\right\}s^{2n}.
\end{eqnarray*}
They can aslo be expressed using Gaussian hypergeometric function by (\ref{D1p}).

\noindent\textbf{Case 2}: $\Delta_1=0$

In this case, (\ref{odeDouglasab}) can be reduced as
\begin{eqnarray}\label{D1za}
\p''(s)=2\sigma\{\ps-s\p'(s)\},\quad\sigma=\pm1,
\end{eqnarray}
The solutions of (\ref{D1za}) are given by
\begin{eqnarray*}
\phi_{\sigma}(s)&=&1+\epsilon
s+2\sum_{n=0}^{\infty}\frac{(-1)^n\sigma^{n+1} s^{2n+2}}{(2n+2)(2n+1)n!}.
\end{eqnarray*}
They can aslo be expressed using error function by (\ref{D1z}).

\noindent\textbf{Case 3}: $\Delta_1<0$

In this case, (\ref{odeDouglasab}) can be reduced as
\begin{eqnarray}\label{D1na}
\left\{1+2\sigma s^2+s^4\right\}\p''(s)=s^2\{\ps-s\p'(s)\},\quad|\sigma|<1.
\end{eqnarray}
The solutions of (\ref{D1na}) are given by (\ref{D1n}). Actually, by Maple we can see that they can aslo be expressed using Gaussian hypergeometric function, but the expression is too complicated. $\phi_0$ is a simper one, it is given by
$$\phi_0=(1+s^4)^{\frac{3}{4}}\mathrm{hypergeom}\left(\left[\f{1}{2},1\right],\left[\f{3}{4}\right],-s^4\right)+\varepsilon s.$$

It is easy to verify that $(p,q)_\p$ are different for any two solutions for the equations (\ref{D1pa}), (\ref{D1za}) and (\ref{D1na}) with different values of $\sigma$ and $\epsilon$. Hence, except Randers type metric and the trivial case, (\ref{D1pa}), (\ref{D1za}) and (\ref{D1na}) provide all the essentially different types of Douglas $\ab$-metrics.

Finally, the required Riemannian metric and $1$-form satisfying (\ref{bijDouglasab}) can be obtained due to Proposition \ref{wienigehiimeig}. In Theorem \ref{douglasab}, we use the special $\b$-deformations given in Theorem \ref{main3}.

\section{Riemannian metrics with conformal $1$-forms}\label{1f}
In order to construct analytical Douglas $\ab$-metrics, we need to construct Riemannian metrics with non-trivial closed and conformal $1$-forms. We don't know whether any given Riemannian metric has such $1$-forms or not. But if it does have, then such property can be  preserved under $\b$-deformations! Specifically, by Proposition \ref{relationunderdeformations} we have the following result.
\begin{lemma}
If $\b$ is closed and  conformal with respect to $\a$, i.e., $\bij=c(x)\aij$, then after $\b$-deformations,
\begin{eqnarray*}
\brij=c\f{\nu(1+2b^2\rho')}{1-b^2\kappa}(\aij-\kappa\bi\bj)+c\left\{\f{\nu(\kappa+b^2\kappa')}{1-b^2\kappa}-4\nu\rho'+2\nu'\right\}\bi\bj.
\end{eqnarray*}
\end{lemma}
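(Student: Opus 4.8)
The plan is to substitute the conformality hypothesis directly into the master formula (\ref{iwneingien}) of Proposition \ref{relationunderdeformations} and then collect terms. First I would record what $\bij=c(x)\aij$ forces. Because $\aij$ is symmetric, so is $\bij$, hence its skew part vanishes, $\sij=0$ (this is just the closedness of $\b$), and therefore $\si=b^j s_{ji}=0$. The symmetric part is $\rij=\bij=c\aij$, and contracting with $b^i$ via $b^j\aij=\bi$ gives $\ri=c\bi$ and $r=b^i\ri=cb^2$.

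Next I would feed the values $\sij=0$, $\si=0$, $\rij=c\aij$, $\ri=c\bi$, $r=cb^2$ into (\ref{iwneingien}) term by term. The term proportional to $\bi\sj+\bj\si$ drops out; in the final bracket the combinations $\rj+\sj$ and $\ri+\si$ reduce to $c\bj$ and $c\bi$, so that bracket becomes a pure $\bi\bj$ contribution with coefficient $2c\big(\tfrac{\kappa'\nu b^2}{1-\kappa b^2}-2\rho'\nu+\nu'\big)$. The remaining three terms give, respectively, $\tfrac{c\nu}{1-\kappa b^2}\aij$, then $-\tfrac{c\kappa'\nu b^2}{1-\kappa b^2}\bi\bj$, and finally $\tfrac{2c\rho'\nu b^2}{1-\kappa b^2}(\aij-\kappa\bi\bj)$.

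Finally I would reorganize everything into the basis $\{\aij-\kappa\bi\bj,\ \bi\bj\}$ used in the statement. The $\aij$-pieces, coming from the first and fourth terms, combine to $\tfrac{c\nu(1+2b^2\rho')}{1-\kappa b^2}$, which must therefore be the coefficient of $\aij-\kappa\bi\bj$; writing it in that combination contributes a further $-\kappa$ multiple to the $\bi\bj$ coefficient. Adding this to the $\bi\bj$ contributions from the remaining terms and simplifying yields $c\big(\tfrac{\nu(\kappa+b^2\kappa')}{1-\kappa b^2}-4\nu\rho'+2\nu'\big)$, which is exactly the claimed coefficient of $\bi\bj$.

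There is no genuine obstacle here: the lemma is a direct specialization of Proposition \ref{relationunderdeformations}, so the only real labour is the bookkeeping for the $\bi\bj$ coefficient, which gathers pieces from the third, fourth and fifth terms of (\ref{iwneingien}) together with the $-\kappa$ piece produced by the regrouping. The single point worth checking is that the two $\kappa\rho'$-contributions, one from the $-\kappa\bi\bj$ part of the fourth term and one from the regrouping, cancel precisely, leaving the clean expression stated in the lemma.
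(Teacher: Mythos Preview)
Your proposal is correct and is exactly the approach the paper takes: the lemma is stated as an immediate consequence of Proposition~\ref{relationunderdeformations}, with no further proof given beyond the substitution you describe. Your bookkeeping of the $\bi\bj$ coefficient, including the cancellation of the two $\kappa\rho'$ contributions, checks out.
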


Solving the equation
$$\f{\nu(\kappa+b^2\kappa')}{1-b^2\kappa}-4\nu\rho'+2\nu'=0,$$
we immediately obtain the following result.
\begin{proposition}\label{weingiengin}
If $\b$ is closed and conformal but not parallel with respect to $\a$, then $\bb$ is closed and conformal with respect to $\ba$ after $\b$-deforamtions if and only if the deformation factors satisfies
$$\nu=C\sqrt{1-b^2\kappa}e^{2\rho},$$
where $C$ is a non-zero constant.
\end{proposition}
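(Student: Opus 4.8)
The plan is to deduce the statement directly from the preceding Lemma, which already records $\brij$ after a $\b$-deformation under the hypothesis $\bij=c(x)\aij$. What remains is to (i) translate ``$\bb$ conformal with respect to $\ba$'' into an algebraic condition on that formula, (ii) confirm that ``$\bb$ closed'' is automatic, and (iii) integrate the resulting ODE for $\nu$.

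First I would use that, by the definition of $\b$-deformations, $\ba=e^\rho\ta$ with $\taij=\aij-\kappa\bi\bj$, so that $\baij=e^{2\rho}(\aij-\kappa\bi\bj)$. Conformality of $\bb$ with respect to $\ba$ means exactly that $\brij$ is a scalar multiple of $\aij-\kappa\bi\bj$. The Lemma exhibits $\brij$ as a combination of the two symmetric tensors $\aij-\kappa\bi\bj$ and $\bi\bj$; since $1-\kappa b^2>0$ forces the first to have rank $n$ while the second has rank $1$, they are linearly independent, so $\brij$ is proportional to $\aij-\kappa\bi\bj$ if and only if the coefficient of $\bi\bj$ vanishes, namely
$$\f{\nu(\kappa+b^2\kappa')}{1-b^2\kappa}-4\nu\rho'+2\nu'=0.$$
As $\b$ is not parallel we have $c\not\equiv0$, so the common factor $c$ cancels.

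The point I expect to require the most care — the author's text passes over it — is checking that closedness of $\bb$ imposes no extra condition. Here I would observe that, because $\b$ is closed and conformal, $b_{i|k}=c\,\aik$ yields $\partial_k b^2=2b^ib_{i|k}=2c\,b_k$, i.e. $\ud(b^2)=2c\,\b$ is proportional to $\b$. Since $\bb=\nu(b^2)\b$ and $\b$ is closed, $\ud\bb=\nu'(b^2)\,\ud(b^2)\wedge\b=2c\,\nu'\,\b\wedge\b=0$, so $\bb$ is automatically closed whenever $\b$ is. Hence the entire content of the proposition is the conformality equation displayed above, and the equivalence in the statement reduces to solving it.

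Finally I would integrate that equation by separation of variables. Rewriting it as $\f{\nu'}{\nu}=2\rho'-\f{\kappa+b^2\kappa'}{2(1-b^2\kappa)}$ and recognizing $\f{\kappa+b^2\kappa'}{1-b^2\kappa}=-\f{\ud}{\ud b^2}\ln(1-b^2\kappa)$, integration in $b^2$ gives $\ln\nu=2\rho+\f12\ln(1-b^2\kappa)+\mathrm{const}$, that is $\nu=C\sqrt{1-b^2\kappa}\,e^{2\rho}$ with $C\neq0$. The converse is then immediate: substituting this $\nu$ back annihilates the $\bi\bj$ term in the Lemma and leaves $\brij$ proportional to $\baij$, so $\bb$ is conformal (and closed, as just shown). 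The only genuinely substantive ingredient is upstream — the Lemma's expression for $\brij$, obtained from Proposition \ref{relationunderdeformations} by setting $\rij=c\,\aij$, $\ri=c\,\bi$, $r=c\,b^2$ and $\si=0$ — which I am free to assume; granting it, every remaining step is routine.
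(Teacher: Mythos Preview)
Your proof is correct and follows exactly the paper's approach: the paper simply states the preceding Lemma and then says ``Solving the equation $\f{\nu(\kappa+b^2\kappa')}{1-b^2\kappa}-4\nu\rho'+2\nu'=0$, we immediately obtain the following result,'' leaving both the rank argument (which it did spell out earlier, before Proposition~\ref{wiengieng}) and the integration implicit. Your added verification that $\bb$ is automatically closed and your explicit integration of the ODE are correct details the paper omits, not a different method.
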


The above fact shows that if $\a$ has closed and conformal $1$-forms, then we can obtain many new metrics having closed and conformal $1$-forms too. In particular, it is known that if $\a$ is of constant sectional curvature, then all its closed and conformal $1$-forms can be completely determined locally\cite{yct-dhfp}. Suppose $h$ is of constant sectional curvature $\mu$, then it can be expressed as
\begin{eqnarray}\label{h}
h=\f{\sqrt{(1+\mu|x|^2)|y|^2-\mu\langle x,y\rangle^2}}{1+\mu|x|^2}
\end{eqnarray}
in some local coordinate system, and its closed and conformal $1$-forms are given by
\begin{eqnarray}\label{Wflat}
W^\flat=\f{\lambda\langle x,y\rangle+(1+\mu|x|^2)\langle
a,y\rangle-\mu\langle a,x\rangle\langle
x,y\rangle}{(1+\mu|x|^2)^\f{3}{2}}
\end{eqnarray}
in the same coordinate system, where $\lambda$ is a constant and $a\in\RR^n$ is a constant vector. In this case, the dual conformal vectors field of $W^\flat$ are very simple and given by
\begin{eqnarray*}
W=\sqrt{1+\mu|x|^2}(\lambda x+a).
\end{eqnarray*}

In particular, take $\mu=0$, $\lambda=1$ and $a=0$, then $h=|y|$ is the standard Euclidean metric and $W^\flat=\xy$. So by Proposition \ref{weingiengin} we have the result below.
\begin{corollary}\label{womeingengiengi}
The Riemmannian metric
$$\ba=e^\rho\sqrt{|y|^2-\kappa\xy^2},$$
has closed and conformal $1$-form expressed as
$$\bb=C\sqrt{1-\kappa\xx}e^{2\rho}\xy,$$
where $\kappa=\kappa(\xx)$ and $\rho=\rho(\xx)$ are two arbitrary functions with an additional condition $1-\kappa\xx>0$. Moreover, $\bar b^2=C^2e^{2\rho}\xx$.
\end{corollary}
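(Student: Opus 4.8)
The plan is to realize the asserted data $(\ba,\bb)$ as the image of the Euclidean metric together with one of its standard closed conformal $1$-forms under a suitable $\b$-deformation, and then to invoke Proposition \ref{weingiengin} directly. First I would fix the seed data: putting $\mu=0$, $\lambda=1$ and $a=0$ in (\ref{h}) and (\ref{Wflat}) gives the Euclidean metric $h=|y|$ together with the $1$-form $W^\flat=\xy$, so I take $\a=h$ and $\b=W^\flat$ as the starting point. Since $h$ is flat its Christoffel symbols vanish, and the covariant derivative of $\b=x_iy^i$ reduces to $b_{i|j}=\partial_j x_i=\delta_{ij}=a_{ij}$; hence $\b$ is closed (indeed $\b=\ud(\tfrac12\xx)$), conformal with nonzero conformal factor $c\equiv1$, and in particular not parallel, so the hypotheses of Proposition \ref{weingiengin} are met. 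Moreover $b_i=x_i$, so $b^2=a^{ij}b_ib_j=\xx$, which is the key identity that converts every deformation factor, a priori a function of $b^2$, into a function of $\xx$.

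Next I would carry out the three $\b$-deformations with factors $\kappa(b^2)$, $\rho(b^2)$ and $\nu(b^2)$. Substituting $\a=|y|$ and $\b=\xy$ into $\ba=e^\rho\sqrt{\a^2-\kappa\b^2}$ yields at once $\ba=e^\rho\sqrt{|y|^2-\kappa\xy^2}$, the asserted form of the deformed metric (with $\kappa$ and $\rho$ now read as functions of $\xx$), and the positivity requirement $1-\kappa b^2>0$ becomes exactly $1-\kappa\xx>0$. Here $\kappa$ and $\rho$ remain entirely free, reflecting the freedom of choice in the first two deformation factors.

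The central step is then immediate: because $\b$ is closed, conformal and not parallel with respect to $\a=h$, Proposition \ref{weingiengin} tells me that the deformed $1$-form $\bb$ is again closed and conformal with respect to $\ba$ precisely when $\nu=C\sqrt{1-b^2\kappa}\,e^{2\rho}$. Using $b^2=\xx$ this reads $\nu=C\sqrt{1-\kappa\xx}\,e^{2\rho}$, and since $\bb=\nu\b=\nu\xy$ I recover $\bb=C\sqrt{1-\kappa\xx}\,e^{2\rho}\xy$. Finally, for the norm I would feed this $\nu$ into the general formula (\ref{bbb}), namely $\bar b^2=\nu^2e^{-2\rho}b^2/(1-b^2\kappa)$; the factor $1-\kappa\xx$ in $\nu^2$ cancels the denominator, leaving $\bar b^2=C^2e^{2\rho}\xx$.

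I do not expect any genuine obstacle, since the corollary is essentially a transcription of Proposition \ref{weingiengin} into explicit Euclidean coordinates. The only points demanding care are bookkeeping ones: one must treat $\kappa$, $\rho$, $\nu$ consistently as functions of $b^2$ throughout the deformation and substitute $b^2=\xx$ only at the end, and one must verify at the outset that the seed $1$-form $\xy$ truly satisfies the closed, conformal and not-parallel hypotheses, so that Proposition \ref{weingiengin} is genuinely applicable rather than merely suggestive.
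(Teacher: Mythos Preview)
Your proposal is correct and follows exactly the paper's own approach: specialize (\ref{h}) and (\ref{Wflat}) to $\mu=0$, $\lambda=1$, $a=0$ to obtain the Euclidean seed $(\a,\b)=(|y|,\xy)$ with $b^2=\xx$, and then apply Proposition \ref{weingiengin}. The paper states this derivation in one sentence immediately before the corollary, while you have (correctly) spelled out the verification of the hypotheses and the norm computation via (\ref{bbb}); there is no substantive difference.
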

Notice that parts of Riemannian metrics in Corollary \ref{womeingengiengi} have constant sectional curvature. For instant, if $\kappa=\frac{\mu}{1+\mu\xx}$ and $e^{2\rho}=\f{1}{1+\mu\xx}$, then $\ba$ is just the metric (\ref{h}) and $\bb$ is just the $1$-form (\ref{Wflat}) with $\lambda=C$ and $a=0$. If $\kappa=\frac{\mu}{1+\mu\xx}$ and $e^{2\rho}=1$, then
$$\ba=\f{\sqrt{(1+\mu\xx)\yy-\mu\xy^2}}{\sqrt{1+\mu\xx}}$$
also has constant sectional curvature $-\mu$. Even so, most of them have non constant sectional curvature.


\noindent Changtao Yu\\
School of Mathematical Sciences, South China Normal
University, Guangzhou, 510631, P.R. China\\
aizhenli@gmail.com

\end{document}